\newtheorem{theorem}{Theorem}[section]
\newtheorem{lemma}[theorem]{Lemma}
\newtheorem{proposition}[theorem]{Proposition}
\theoremstyle{definition}
\newtheorem{definition}[theorem]{Definition}
\newtheorem{example}[theorem]{Example}
\theoremstyle{remark}
\newtheorem{remark}[theorem]{Remark}
\numberwithin{equation}{section} 
\DeclareMathOperator{\diam}{diam}
\DeclareMathOperator{\disp}{disp}
\DeclareMathOperator{\med}{med}
\DeclareMathOperator{\dH}{d_H}
\title{Anti-absorbing ternary operations on metric spaces}
\author{Leonid V. Kovalev}
\address{215 Carnegie, Department of Mathematics, Syracuse University, Syracuse, NY 13244, USA}
\email{lvkovale@syr.edu}
\subjclass[2020]{Primary 51F30; Secondary 30L05, 54B20, 54C15} 
\keywords{Metric space, absolute retract, Lipschitz retract, ternary operation}
\begin{document}
\baselineskip6mm

\begin{abstract} The existence of a median-type ternary operation on a metric space is known to have a number of implications for the geometry of the space. For such operations, if two of the three arguments coincide, they also coincide with the output of the operation. We consider ternary operations with the opposite property: if two of the arguments coincide, the output is equal to the third one. The existence of such an operation is a necessary condition for the space to be an absolute retract.  
\end{abstract}

\maketitle

\section{Introduction} 

In 1979, van Mill and van de Vel~\cite{vanMill1} launched the investigation of topological spaces $X$ that admit a continuous ternary operation $\sigma$ with the \emph{absorption property}: 
\begin{equation}\label{eq-absorption}
\sigma(a, a, b)=\sigma(a, b, a)=\sigma(b, a, a)=a \quad \forall a, b\in X.    
\end{equation}
Such an operation, called a \emph{mixer} in~\cite{vanMill1}, is inherited by any retract of $X$, thus providing a strong necessary condition for a continuum to be an absolute retract~\cite{vanMill1, vanMill2}. The prototypical example of a mixer is the coordinate-wise median on the Hilbert cube $[0, 1]^\infty$. 

In this paper we compare~\eqref{eq-absorption} with its natural counterpart, the \emph{anti-absorption property}:  
\begin{equation}\label{eq-anti-absorption}
\tau(a, a, b)=\tau(a, b, a)=\tau(b, a, a) = b \quad \forall a, b\in X.    
\end{equation}
We call a continuous map 
$\tau\colon X^3\to X$ with the property~\eqref{eq-anti-absorption} a \emph{co-mixer}. The prototypical example of such an operation is the symmetric difference of sets: $\tau(A, B, C) = A\triangle B\triangle C$. 

When $X$ is a metric space, one often considers its Lipschitz retracts instead of merely continuous ones. Such retracts inherit Lipschitz (co-)mixers from the original space. 
Every absolute Lipschitz retract admits a Lipschitz mixer, and the converse holds under additional assumptions~\cite{Hohti}. 
In this paper we will show that: 
\begin{enumerate}[(i)]
\item a connected finite $CW$-complex with a co-mixer must be contractible (\S\ref{sec-topological})
\item a metric space with a Lipschitz co-mixer may have nontrivial fundamental group (\S\ref{sec-topological}) 
\item a metric space may admit a Lipschitz mixer without a Lipschitz co-mixer, and vice versa (\S\ref{sec-topological}--\ref{sec-metric})
\item normed vector spaces, as well as absolute Lipschitz retracts, admit a Lipschitz co-mixer (\S\ref{sec-mix-comix})
\end{enumerate}

As an application, in \S\ref{sec-retractions} we give a short proof of a theorem of Akofor~\cite{Akofor} on the Lipschitz retraction of finite subsets of normed spaces. 

\section{Topological considerations}\label{sec-topological}

We begin with a topological lemma involving the concept of an $H$-space: a topological space with a continuous binary operation with a two-sided unit. The book~\cite{Hatcher} contains the necessary background;  in particular, see~\cite[\S3.C]{Hatcher} on $H$-spaces.

\begin{proposition}\label{thm-homotopy} Suppose that $X$ is a topological space with a co-mixer $\tau$. Then for every point $e\in X$ and every $n\in \mathbb N$ the homotopy group $\pi_n(X, e)$ satisfies $2\alpha = 0$ for all $\alpha \in \pi_n(X, e)$. 
If, in addition, $X$ is a connected finite $CW$-complex, then it is contractible. 
\end{proposition}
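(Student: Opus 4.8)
The plan is to first convert the co-mixer into an $H$-space structure and read off the torsion statement almost for free, and then to treat the finite $CW$ case by a mixture of Euler-characteristic and localization arguments, isolating the $2$-primary homology as the one genuinely hard point.

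For the torsion assertion, fix $e$ and set $\mu(a,b)=\tau(a,e,b)$. The anti-absorption identities $\tau(e,e,b)=b$ and $\tau(a,e,e)=a$ show that $e$ is a two-sided unit, so $(X,\mu,e)$ is an $H$-space. The decisive extra feature is that the squaring map $a\mapsto\mu(a,a)=\tau(a,e,a)$ is \emph{constant}, since the middle identity $\tau(a,b,a)=b$ forces $\tau(a,e,a)=e$. Now recall the standard $H$-space fact that $\mu$ induces addition on homotopy, so $\mu_*(\alpha,\alpha)=\alpha+\alpha$ in $\pi_n(X,e)$; as $\mu_*(\alpha,\alpha)$ is represented by $x\mapsto\mu(f(x),f(x))$ for any representative $f$ of $\alpha$, and this map is constant, we conclude $2\alpha=0$ for every $n\ge 1$.

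Now assume $X$ is a connected finite $CW$-complex. Being an $H$-space, $X$ is simple, hence nilpotent, so the mod-$\mathcal C$ Hurewicz and rationalization machinery applies. Every $\pi_n(X)$ is an $\mathbb F_2$-vector space by the first part, so $\pi_n(X)\otimes\mathbb Q=0$ and therefore $H_n(X;\mathbb Q)=0$ for $n\ge 1$; in particular $\chi(X)=1$. Since $\pi_1(X)$ is finitely generated of exponent $2$, it is a finite $2$-group, so the universal cover $\widetilde X$ is again a finite $CW$-complex, simply connected, with the same higher homotopy, hence also rationally acyclic with $\chi(\widetilde X)=1$. Multiplicativity of the Euler characteristic under the finite cover yields $1=\chi(\widetilde X)=|\pi_1(X)|\cdot\chi(X)=|\pi_1(X)|$, so $X$ is simply connected.

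It remains to kill the finite groups $\widetilde H_n(X;\mathbb Z)$. Localizing at an odd prime $p$ annihilates every $\pi_n(X)$ (multiplication by $2$ is invertible there), so $X_{(p)}$ is weakly contractible and $\widetilde H_*(X;\mathbb Z)$ carries no odd torsion; thus each $\widetilde H_n(X;\mathbb Z)$ is a finite $2$-group. \textbf{The main obstacle is precisely this $2$-primary part}, where localization gives nothing since the homotopy is already $2$-torsion, and where the naive hope fails: the non-finite space $K(\mathbb Z/2,m)$ is an $H$-space with constant squaring map, so finiteness must be used decisively. Here I would invoke that $H^*(X;\mathbb F_2)$ is a finite-dimensional connected Hopf algebra over $\mathbb F_2$ and, by Borel's structure theorem, a tensor product of truncated polynomial algebras; I would then rule out any positive-degree generator by playing the Steenrod-algebra action against the exponent-$2$ condition on homotopy (equivalently, by extracting an order-$4$ element in some $\pi_n$ from a lowest-degree generator, as already occurs in mod-$2$ Moore spaces), forcing $H^*(X;\mathbb F_2)=\mathbb F_2$. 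Once $X$ is simply connected with trivial reduced homology over $\mathbb Q$ and $\mathbb F_2$, hence over $\mathbb Z$, Hurewicz and Whitehead give contractibility. An attractive alternative endgame, should the $H$-structure be shown homotopy-commutative, is Hubbuck's torus theorem, which would give $X\simeq(S^1)^k$ and then $k=0$ from the $2$-torsion of $\pi_1$.
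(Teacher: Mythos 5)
Your first paragraph is correct and is essentially the paper's own proof: $\mu(a,b)=\tau(a,e,b)$ is an $H$-multiplication with strict unit $e$, the Eckmann--Hilton argument identifies the operation induced by $\mu$ on $\pi_n(X,e)$ with the group law, and since the squaring map $a\mapsto\mu(a,a)=\tau(a,e,a)=e$ is constant, $2\alpha=\mu_*(\alpha,\alpha)=0$. (The paper takes $\mu(x,y)=\tau(x,y,e)$; the difference is immaterial.)

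The second half contains a genuine gap. The paper handles the finite $CW$ case by a single citation: a connected finite complex with $2\pi_n=0$ for all $n$ is contractible \cite{ArkowitzOshimaStrom}. You set out to prove this from scratch, and your reductions are sound: nilpotence, rational acyclicity, and $\chi(X)=1$; $\pi_1$ finitely generated of exponent $2$, hence finite elementary abelian, so the universal cover is a finite complex and multiplicativity of the Euler characteristic forces $\pi_1=1$ (a nice touch); odd localization shows $\widetilde H_*(X;\mathbb Z)$ is $2$-primary. But the step you yourself label ``the main obstacle'' --- killing the $2$-torsion in homology --- is not proved; it is a declaration of intent (``I would invoke \dots I would then rule out \dots by playing the Steenrod-algebra action against the exponent-$2$ condition''). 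That step is the entire content of the cited theorem, and nothing in your sketch shows the proposed tools suffice: Borel's structure theorem constrains the Hopf algebra $H^*(X;\mathbb F_2)$, but no mechanism is given for ``extracting'' an order-$4$ homotopy element from a lowest-degree generator of an arbitrary finite complex; the classical fact that the mod-$2$ Moore space has an element of order $4$ in its homotopy does not transfer, since the complex at hand is not a Moore space and further cells can kill such elements. Results of this type --- bounded-exponent homotopy forcing contractibility of a finite complex --- are deep; the known arguments pass through Miller's theorem (the Sullivan conjecture) and related localization technology, which is precisely why the paper's citation is doing real work rather than routine bookkeeping. Your fallback via Hubbuck's torus theorem also fails as stated: Hubbuck requires the multiplication $\mu$ itself to be homotopy commutative, and Eckmann--Hilton only gives commutativity of the induced operation on homotopy groups; nothing forces $\tau(a,e,b)\simeq\tau(b,e,a)$, since $\tau$ is not assumed symmetric.
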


\begin{proof} The binary operation $\mu(x, y) = \tau(x, y, e)$ satisfies $\mu(x, e) = x = \mu(e, x)$ for all $x\in X$ and thus makes $X$ an $H$-space. By the Eckmann-Hilton argument, the group operation on $\pi_n(X, e)$ is commutative and agrees with the one induced by $\mu$ (see Theorems~3.1 and~3.2 in~\cite{James}). Since $\mu(x, x) = e$ for all $x\in X$, it follows that $2\alpha = 0$ for all $\alpha\in \pi_n(X, e)$.
If, in addition, $X$ is a connected finite $CW$-complex, then the fact that $2\pi_n(X, e)=0$ for all $n$ implies that $X$ is contractible~\cite[\S3]{ArkowitzOshimaStrom}.     
\end{proof}

Every topological space with a mixer has trivial homotopy groups of all orders~\cite[Theorem 1.3]{vanMill1} (the theorem is stated for continua but the proof works in greater generality). In contrast, some spaces with a co-mixer are not simply-connected.  

\begin{proposition}\label{example-measure-algebra} There exists a connected metric space $X$ with a Lipschitz co-mixer and $\pi_1(X)\cong \mathbb Z/(2\mathbb Z)$. 
\end{proposition}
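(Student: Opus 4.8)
The plan is to realize $X$ as a quotient of a measure algebra. Let $\mathcal{M}$ denote the measure algebra of the unit interval with Lebesgue measure $\mu$: its points are equivalence classes of measurable sets $A \subseteq [0,1]$ modulo null sets, with the metric $d(A,B) = \mu(A \triangle B)$. The symmetric difference $\tau_0(A,B,C) = A \triangle B \triangle C$ is a $1$-Lipschitz co-mixer on $\mathcal{M}$, since $A \triangle A \triangle B = B$ while $\mu\bigl((A\triangle B \triangle C)\triangle(A'\triangle B'\triangle C')\bigr) \le \mu(A\triangle A') + \mu(B \triangle B') + \mu(C\triangle C')$. However, $\mathcal{M}$ is itself contractible: the map $H(A,t) = A \cap [t,1]$ is a Lipschitz homotopy from the identity to the constant map at $\emptyset$. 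So $\mathcal{M}$ is simply connected and cannot serve as $X$ directly; instead I would pass to a suitable quotient.

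First I would observe that complementation $\iota(A) = [0,1] \setminus A$ is a fixed-point-free isometric involution of $\mathcal{M}$: it is an isometry because $A^c \triangle B^c = A \triangle B$, and it has no fixed points because $d(A, A^c) = \mu(A \triangle A^c) = \mu([0,1]) = 1$ for every $A$. Let $X = \mathcal{M} / \{\mathrm{id}, \iota\}$ be the orbit space, whose points are the pairs $\{A, A^c\}$, endowed with the quotient metric $\bar d(\{A,A^c\},\{B,B^c\}) = \min\{d(A,B),\, d(A,B^c)\}$. Because $\iota$ is an isometry, this is a genuine metric (the triangle inequality and nondegeneracy are immediate from $d(A,A^c)=1$), and $X$ is connected as a continuous image of the connected space $\mathcal{M}$.

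Next I would check that $\tau_0$ descends to a co-mixer on $X$. The key identity is $A^c \triangle B = (A \triangle B)^c$, which shows that complementing any one argument of $\tau_0$ complements its output. Consequently, replacing each of $A, B, C$ by an arbitrary representative of its class changes $\tau_0(A,B,C)$ only by complementation — an even number of complementations cancel, an odd number leave a single one — and in either case the output class in $X$ is unchanged. Thus $\bar\tau(\{A,A^c\},\{B,B^c\},\{C,C^c\}) = \{\tau_0(A,B,C),\, \tau_0(A,B,C)^c\}$ is well defined, and it inherits both the anti-absorption property and the $1$-Lipschitz estimate from $\tau_0$ (choosing, for each pair of classes, the representatives that realize the corresponding $\bar d$).

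Finally I would compute $\pi_1(X)$. Since $d(A, A^c) = 1$ uniformly, every ball of radius $1/2$ in $\mathcal{M}$ is disjoint from its image under $\iota$, so the action of $\{\mathrm{id},\iota\}$ is properly discontinuous and the quotient map $p \colon \mathcal{M} \to X$ is a double covering. As $\mathcal{M}$ is contractible — in particular simply connected and path-connected, and, since every metric ball in $\mathcal{M}$ is path-connected, locally path-connected — the map $p$ is the universal covering of $X$, whence $\pi_1(X)$ is isomorphic to the deck group $\{\mathrm{id},\iota\} \cong \mathbb{Z}/(2\mathbb{Z})$. The steps requiring the most care are the descent of $\tau_0$ through the quotient — the parity argument built on $A^c\triangle B=(A\triangle B)^c$ — and the verification that $p$ is an honest covering map with the local path-connectedness needed to invoke the covering-space dictionary; the metric and Lipschitz estimates are then routine.
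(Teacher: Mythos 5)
Your proposal is correct and follows essentially the same route as the paper: the quotient of the measure algebra $\mathcal M$ by the complementation involution, with the symmetric-difference co-mixer descending to the quotient via the identity $A^c\triangle B=(A\triangle B)^c$. The only difference is that you spell out the covering-space argument (proper discontinuity via $d(A,A^c)=1$, local path-connectedness of $\mathcal M$) that the paper compresses into ``by construction $\pi_1(\widetilde{\mathcal M})\cong \mathbb Z/(2\mathbb Z)$,'' which is a welcome addition rather than a deviation.
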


\begin{proof} Let $\lambda$ be the Lebesgue measure on $[0, 1]$. Let $\mathcal M$ be the associated measure algebra, i.e., the set of all measurable subsets of $[0, 1]$ modulo nullsets. When endowed with the metric $\rho(A, B)= \lambda(A\triangle B)$, the measure algebra becomes a contractible metric space. It admits both a Lipschitz mixer: 
\begin{equation}\label{eq-set-mixer}
\sigma(A, B, C) = 
(A\cap B)\cup (A\cap C)\cup (B\cap C) 
\end{equation}
and a Lipschitz co-mixer: 
\begin{equation}\label{eq-set-comixer}
\tau(A, B, C) = A\triangle B\triangle C.
\end{equation}

The set complement operation is an isometric involution on $\mathcal M$. By identifying each set with its complement we obtain a quotient space $(\widetilde{\mathcal M}, \widetilde{\rho})$ where 
\[
\widetilde{\rho}([A], [B]) = \min(\lambda(A\triangle B), \lambda(A\triangle B^c)). 
\]
By construction $\pi_1(\widetilde{\mathcal M}) \cong \mathbb Z/(2\mathbb Z)$, which implies that $\widetilde{\mathcal M}$ does not admit a mixer. In particular,~\eqref{eq-set-mixer} does not descend to the quotient. In contrast,~\eqref{eq-set-comixer} does: the induced co-mixer 
\[
\widetilde{\tau}([A], [B], [C]) = [A\triangle B\triangle C]
\]
is well-defined on $\widetilde{\mathcal M}^3$, and it is easy to see that $\widetilde{\tau}$ is $1$-Lipschitz in each argument. 
\end{proof}
 
In the context of nonsmooth metric spaces, the concept of Lipschitz homotopy is sometimes preferable to continuous homotopy~\cite{DHLT, HST}. It is thus relevant to note that the Lipschitz fundamental group of the space in Proposition~\ref{example-measure-algebra} is also nontrivial. Indeed, the curve $\gamma(t) = [0, t]$, $0\le t\le 1$, is a geodesic between $\varnothing$ and $[0, 1]$ in $\mathcal M$, and its image in the quotient space is a noncontractible rectifiable loop based at $\varnothing$. 

\section{Metric considerations}\label{sec-metric}

When $(X, d)$ is a metric space and $E\subset X$, we define the \emph{displacement} of a map $f\colon E\to X$ as $\disp(f) = \sup_{x\in E} d(f(x), x)$. A map $f\colon X\to Y$ is called $L$-Lipschitz if $d_Y(f(a), f(b))\le L d_X(a, b)$ for all $a,b\in X$.

\begin{lemma}\label{lem-interchange}
Suppose that $(X, d)$ is a metric space with a co-mixer $\tau$ that is $L$-Lipschitz in each variable. Then for any two points $a, b\in X$ there exists an $L$-Lipschitz map $f\colon X\to X$ such that $f(a)=b$, $f(b)=a$, and $\disp(f)\le Ld(a, b)$.    
\end{lemma}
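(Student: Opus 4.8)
The plan is to produce $f$ explicitly from $\tau$ by freezing two of its three slots. Specifically, I would set $f(x) = \tau(a, b, x)$, so that $f$ is obtained from the co-mixer by varying only its third argument while holding the first two fixed at $a$ and $b$.

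First I would verify the two interpolation conditions directly from the anti-absorption identities~\eqref{eq-anti-absorption}. At $x = a$ the third argument agrees with the first, and $\tau(a, b, a) = b$ gives $f(a) = b$. At $x = b$ the last two arguments agree, and the identity $\tau(b, a, a) = b$ (read as $\tau(p, q, q) = p$) specializes to $\tau(a, b, b) = a$, hence $f(b) = a$. Thus $f$ interchanges $a$ and $b$ as required. The Lipschitz property is then immediate: since $\tau$ is $L$-Lipschitz in each variable and $f$ keeps the first two arguments fixed, $d(f(x), f(x')) = d(\tau(a,b,x), \tau(a,b,x')) \le L\, d(x, x')$ for all $x, x'\in X$.

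The only step that requires an idea is the displacement bound, and I would obtain it by expressing the identity map itself as a value of $\tau$. The key observation is that freezing two equal arguments to $b$ recovers the identity: by~\eqref{eq-anti-absorption} one has $\tau(b, b, x) = x$ for every $x\in X$. Consequently $d(f(x), x) = d(\tau(a, b, x), \tau(b, b, x))$, and since these two expressions differ only in their first argument, Lipschitzness in that single variable bounds the distance by $L\, d(a, b)$. Taking the supremum over $x\in X$ yields $\disp(f) \le L\, d(a, b)$.

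I expect the re-expression of $x$ as $\tau(b, b, x)$ to be the main (and essentially the only) obstacle: everything else is a direct substitution into the defining identities, and the Lipschitz estimate follows because each reduction varies $\tau$ in just one coordinate at a time. One could equally well compare against $\tau(a, a, x) = x$, varying instead the second argument; either choice gives the same bound.
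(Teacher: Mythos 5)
Your proof is correct and is essentially the paper's own argument: the paper takes $f(x)=\tau(x,a,b)$ and bounds the displacement by comparing with $\tau(x,a,a)=x$, while you take $f(x)=\tau(a,b,x)$ and compare with $\tau(b,b,x)=x$, which is the same construction up to a permutation of the frozen slot. Since the anti-absorption identity~\eqref{eq-anti-absorption} holds for every placement of the repeated argument, the two choices are interchangeable and yield identical bounds.
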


\begin{proof} Let $f(x)=\tau(x, a, b)$. The displacement bound follows from the Lipschitz property of $\tau$ and the fact that $\tau(x, a, a)=x$ for all $x$. The other claimed properties of $f$ are also easy to see.       
\end{proof}

The map $f$ in Lemma~\ref{lem-interchange} need not have a Lipschitz inverse or even be injective; thus, this property is different from Lipschitz homogeneity~\cite{HerronMayer}. But it does imply some kind of homogeneity of the space, as the following lemma demonstrates. 

\begin{lemma}\label{example-unrectifiable}
Let $\Gamma$ be a metric arc with endpoints $a, b$; that is, $\Gamma$ is the image of a homeomorphism $\phi\colon [0, 1]\to \Gamma$ such that $\phi(0)=a$ and $\phi(1)=b$. Suppose that $\Gamma$ is unrectifiable but $\phi([\varepsilon, 1])$ is rectifiable for every $\varepsilon>0$. Then $\Gamma$ does not admit a Lipschitz co-mixer. 
\end{lemma}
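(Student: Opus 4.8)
The plan is to assume, toward a contradiction, that $\Gamma$ carries a co-mixer that is $L$-Lipschitz in each variable, and to extract from it a finite-length path whose image nevertheless sweeps out an infinite-length subarc. First I would apply Lemma~\ref{lem-interchange} to the two endpoints $a=\phi(0)$ and $b=\phi(1)$, obtaining an $L$-Lipschitz map $f\colon\Gamma\to\Gamma$ with $f(a)=b$ and $f(b)=a$. Conjugating by the parametrization, set $h=\phi^{-1}\circ f\circ\phi\colon[0,1]\to[0,1]$; this is continuous with $h(0)=1$, $h(1)=0$, and $f\circ\phi=\phi\circ h$.

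The key geometric input is that all of the length of $\Gamma$ is concentrated at the endpoint $a$: the subarc $\phi([0,1/2])$ is unrectifiable (its complementary subarc $\phi([1/2,1])$ is rectifiable by hypothesis, while $\Gamma$ is not), whereas every subarc $\phi([\alpha,1])$ with $\alpha>0$ is rectifiable. Since $h(0)=1$ and $h$ is continuous, I can choose $\alpha_0\in(0,1)$ with $h(\alpha_0)>1/2$. Applying the intermediate value theorem to $h$ on $[\alpha_0,1]$, where it runs from $h(\alpha_0)>1/2$ down to $h(1)=0$, gives $h([\alpha_0,1])\supseteq[0,1/2]$. Transporting this back through $\phi$ yields
\[
f\bigl(\phi([\alpha_0,1])\bigr)=\phi\bigl(h([\alpha_0,1])\bigr)\supseteq\phi([0,1/2]);
\]
that is, $f$ maps the \emph{rectifiable} arc $\phi([\alpha_0,1])$ onto a set containing the \emph{unrectifiable} arc $\phi([0,1/2])$.

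To close the argument I would compare sizes through the one-dimensional Hausdorff measure $\mathcal{H}^1$. The path $f\circ\phi|_{[\alpha_0,1]}$ is the $L$-Lipschitz image of a rectifiable curve, so it has finite length; reparametrizing by arclength exhibits its image as the image of a $1$-Lipschitz map out of an interval, whence that image has finite $\mathcal{H}^1$ measure. On the other hand this image contains $\phi([0,1/2])$, a simple arc of infinite length, and $\mathcal{H}^1(\phi([0,1/2]))=\infty$: partitioning the arc and using that the $\mathcal{H}^1$ measure of a connected set is at least its diameter shows that $\mathcal{H}^1$ of a simple arc dominates its length. Monotonicity of $\mathcal{H}^1$ then forces $\infty\le L\cdot\mathrm{length}\bigl(\phi([\alpha_0,1])\bigr)<\infty$, a contradiction.

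The main obstacle is precisely this last comparison. Because $f$ need not be injective or monotone---$h$ is merely a continuous self-map of $[0,1]$---the path $f\circ\phi|_{[\alpha_0,1]}$ may traverse $\phi([0,1/2])$ in a badly scrambled order, so one cannot read off its length directly from a partition of the target arc. Routing the estimate through $\mathcal{H}^1$, which is insensitive to the order of traversal, is what makes rigorous the principle that a finite-length image cannot contain an infinite-length arc. By contrast, the choice of $\alpha_0$ and the intermediate value step are routine once the concentration of length at $a$ is isolated; I would note in passing that the surjectivity of $f$ (its image is a connected subset of $\Gamma$ containing both endpoints, hence all of $\Gamma$) is a tempting shortcut but is not actually needed, since the intermediate value theorem already supplies the covering $[0,1/2]\subseteq h([\alpha_0,1])$.
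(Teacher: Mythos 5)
Your proof is correct and follows essentially the same route as the paper: both argue via (the contrapositive of) Lemma~\ref{lem-interchange}, using the fact that a Lipschitz image of a rectifiable arc is rectifiable while all the length of $\Gamma$ is concentrated at $a$. The paper states this very tersely (any Lipschitz $f\colon\Gamma\to\Gamma$ with $f(b)=a$ must be constantly $a$, so $f(a)=b$ is impossible); your intermediate-value-theorem and $\mathcal{H}^1$ details are a faithful, rigorous unpacking of that same argument rather than a different approach.
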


\begin{proof} The image of a rectifiable arc under a Lipschitz map is also rectifiable. Thus, any Lipschitz map $f\colon \Gamma\to \Gamma$ with $f(b)=a$ must be identically equal to $a$. This shows that $\Gamma$ fails the conclusion of Lemma~\ref{lem-interchange}. 
\end{proof}

\begin{definition}\label{def-metric-BT} A metric space $(X, d)$ has \emph{bounded turning}  if there exists a constant $C$ such that any two points $a, b\in X$ are contained in some compact connected set $E\subset X$ with $\diam E \le Cd(a, b)$. 
\end{definition}

Corollary~3.3 in~\cite{Kovalev2023} shows that a metric arc has bounded turning if and only if it admits a Lipschitz mixer. On the other hand, Example~5.5 in~\cite{Kovalev2023} shows how to construct a metric arc $\Gamma$ of bounded turning which satisfies the assumption of Lemma~\ref{example-unrectifiable}. We thus obtain a metric arc with a Lipschitz mixer but no Lipschitz co-mixer. 

If one is willing to accept a non-connected space, then a simpler example of this kind is available. First, note that every subset of $\mathbb R$ admits a Lipschitz mixer, namely the median map (which we denote by $\med$).

\begin{example}\label{example-linear-gap} Let $X=\mathbb R\setminus (-1, 1)$ with the standard metric. Suppose that $f\colon X\to X$ is a Lipschitz map with $f(1)=-1$. By continuity, $f(x)\le -1$ for all $x\ge 1$. Hence $\disp(f)=\infty$, which by Lemma~\ref{lem-interchange} implies that $X$ has no Lipschitz co-mixer. 
\end{example}
 
It is not clear if there is a geometric description of the subsets of $\mathbb R$ that admit a Lipschitz co-mixer. For example, every additive subgroup of $\mathbb R$ does: let  
$\tau(a, b, c) = a+b+c-2\med(a, b, c)$.

\section{Ternary operations in normed spaces}\label{sec-mix-comix}

The authors of~\cite{vanMill2} asked ``whether every Banach space has a ``natural'' mixer'', thus contrasting their existence theorem~\cite[Theorem 2.4]{vanMill2} with the lack of a direct construction.  
Dranishnikov~\cite{Dranishnikov} sketched the following construction of a mixer in an arbitrary normed space, attributed to E. V. Shchepin. 

\begin{definition}\label{def-inradius-mixer} Given a normed space $X$, 
define the \emph{incenter mixer} $\sigma\colon X^3\to X$ by the formula
\begin{equation}\label{eq-mixer-def}
    \sigma(a, b, c)=\frac{\|b-c\|a + \|a-c\|b+\|a-b\|c}{\|b-c\| + \|a-c\|+\|a-b\|}, 
\end{equation}
extended by $\sigma(a, a, a)=a$. 
\end{definition}

When $X$ is a Euclidean space, $\sigma$ is the center of the inscribed circle in the triangle $abc$. It is the weighted average of $a, b, c$ where the weight of each vertex is the length of the opposite side. The mixer property of $\sigma$ is immediate from~\eqref{eq-mixer-def}. The Lipschitz continuity is less obvious, especially considering that the circumcenter fails this property.  

\begin{lemma}\label{lem-incenter-lip} In any normed space $X$, the incenter mixer~\eqref{eq-mixer-def} is $1$-Lipschitz in each variable separately. 
\end{lemma}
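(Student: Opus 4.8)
The plan is to exploit the full symmetry of formula~\eqref{eq-mixer-def}: since $\sigma(a,b,c)$ is invariant under every permutation of its three arguments, it suffices to prove that the partial map $a\mapsto \sigma(a,b,c)$ is $1$-Lipschitz for fixed $b,c$. Throughout I write $p=\|b-c\|$, $q=\|a-c\|$, $r=\|a-b\|$, and $s=p+q+r$, so that $\sigma=(pa+qb+rc)/s$. The case $b=c$ is trivial, for then the formula collapses to $\sigma(a,b,c)\equiv b$; I therefore assume $b\ne c$, which has the convenient effect that $s\ge p>0$, so no denominator ever degenerates. Fixing a direction $h\in X$, I would restrict attention to the line $t\mapsto a+th$ and set $\psi(t)=\sigma(a+th,b,c)$, aiming to show that $\psi$ is $\|h\|$-Lipschitz; since $a$ and a second point differ by a single such $h$, this yields the claim.

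The geometric core of the argument is the estimate
\begin{equation*}
\|b-\sigma\|+\|c-\sigma\|\le q+r .
\end{equation*}
To prove it I would rewrite $b-\sigma=\bigl(p(b-a)+r(b-c)\bigr)/s$ and $c-\sigma=\bigl(p(c-a)+q(c-b)\bigr)/s$, and apply the triangle inequality together with the identities $\|b-a\|=r$, $\|b-c\|=p$, $\|c-a\|=q$ to obtain $\|b-\sigma\|\le 2pr/s$ and $\|c-\sigma\|\le 2pq/s$. Summing gives $\|b-\sigma\|+\|c-\sigma\|\le 2p(q+r)/s$, and since the triangle inequality $p=\|b-c\|\le\|a-b\|+\|a-c\|=q+r$ forces $s=p+q+r\ge 2p$, the factor $2p/s$ is at most $1$, which is exactly the desired bound.

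With this in hand I would differentiate along the line. Writing $q(t)=\|a+th-c\|$ and $r(t)=\|a+th-b\|$, the quotient rule gives, at every $t$ where these scalar functions are differentiable,
\begin{equation*}
\psi'(t)=\frac{ph+q'(t)\,(b-\psi(t))+r'(t)\,(c-\psi(t))}{s(t)} .
\end{equation*}
Because $t\mapsto\|a+th-x\|$ is Lipschitz in $t$ with constant $\|h\|$, the reverse triangle inequality gives $|q'(t)|,|r'(t)|\le\|h\|$; combining this with the geometric estimate applied at the point $a+th$ yields $\|\psi'(t)\|\le \|h\|\,\bigl(p+\|b-\psi\|+\|c-\psi\|\bigr)/s(t)\le\|h\|$.

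The only delicate point, which I expect to be the main obstacle, is that the norm need not be differentiable, so $q(\cdot)$ and $r(\cdot)$ are not differentiable everywhere. This turns out to be mild. The functions $q,r$ are convex, being norms composed with affine maps, hence differentiable off a countable set, and the formula above shows that the \emph{vector}-valued differentiability of $\psi$ reduces entirely to the \emph{scalar} differentiability of $q$ and $r$, so no Radon--Nikodym property of $X$ is needed. Moreover, since $s\ge p>0$, the map $\psi=N/D$ is a ratio of (Bochner) absolutely continuous functions with denominator bounded away from $0$, hence absolutely continuous on every bounded interval. An absolutely continuous map whose a.e. derivative is bounded in norm by $\|h\|$ is $\|h\|$-Lipschitz, so integrating the bound $\|\psi'(t)\|\le\|h\|$ over the segment from $a$ to $a+h$ completes the proof; the finitely many parameters where $a+th$ equals $b$ or $c$ lie in a null set and are harmless.
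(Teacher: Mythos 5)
Your proof is correct and is essentially the paper's own argument: both fix two of the three points (yours by symmetry, the paper's by translation invariance and homogeneity), differentiate along lines using a.e.\ differentiability of the convex scalar functions $t\mapsto\|a+th-b\|$ and $t\mapsto\|a+th-c\|$ together with absolute continuity, and bound the resulting directional derivative by the triangle inequality. Your geometric estimate $\|b-\sigma\|+\|c-\sigma\|\le q+r$ is an exact repackaging of the paper's bounds on the coefficients in~\eqref{eq-DuF-1} (those coefficients are $s(b-\sigma)$ and $s(c-\sigma)$ up to sign), and your use of $s\ge 2p$ matches the paper's final inequality $p^2\ge p+2\|x-a\|+2\|x\|$, so the two computations agree term by term.
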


\begin{proof} By the translation invariance of $\sigma$, we can place one of the points at $0$, so that the mixer is computed for the triple $0, a, x$ with a fixed nonzero vector $a\in X$. Let
\begin{equation}\label{eq-F-sigma}
F(x) := \sigma(0, a, x) = \frac{\|a\|x + \|x\|a}{\|a\|+\|x\|+\|x-a\|}.
\end{equation}
By the homogeneity of $F$ it suffices to consider the case $\|a\|=1$. This simplifies~\eqref{eq-F-sigma} to 
\[
F(x) = \frac{x + \|x\|a}{p} \quad \text{where } p = 1+\|x\|+\|x-a\|.
\]
Since $p$ is bounded from below, the map $F$ is locally Lipschitz, hence absolutely continuous on lines. It remains to estimate its directional derivative $D_u F$ for a unit vector $u$. We have
\begin{equation}\label{eq-DuF-1}
\begin{split} 
p^2 D_u F(x) & = p (u + aD_u\|x\|) 
- (x+\|x\|a) (D_u\|x\|+D_u\|x-a\|) \\
&= pu  + \big(a-x+\|x-a\|a\big) D_u\|x\|  
- \big(x+\|x\|a\big)D_u\|x-a\|, 
\end{split}
\end{equation}
hence 
\[
p^2 \|D_u F(x)\| \le  p  + 2\|x-a\| + 2\|x\|.
\]
Since $p\ge 2$, it follows that
\[
p^2 = p + p(\|x-a\| + \|x\|) \ge p + 2\|x-a\| + 2\|x\|
\]
which proves that $\|D_uF(x)\|\le 1$.
\end{proof}

An alternative construction of a Lipschitz mixer is available in certain sequence spaces: take the component-wise median of three vectors $a, b, c$. In $\ell^\infty$ this construction produces a mixer with a stronger Lipschitz property than the one in  Lemma~\ref{lem-incenter-lip}: 
\begin{equation}\label{eq-joint-1-lip}
\|\med(a, b, c) - \med(a', b', c')\|   \le \max(\|a-a'\|, \|b-b\|, \|c-c'\|).  
\end{equation}
However, $\med(a, b, c)$ does not necessarily lie in the affine span of $a, b, c$ and therefore the median mixer is not inherited by subspaces. It remains unclear which normed spaces admit a mixer that is jointly nonexpanding in the sense of~\eqref{eq-joint-1-lip}.

\begin{definition}\label{def-nagel-comixer} Given a normed space $X$, 
define the \emph{Nagel co-mixer} 
$\tau\colon X^3\to X$ by the formula
\begin{equation}\label{eq-def-comixer}
\tau(a, b, c) = a+b+c - 2\sigma(a, b, c)    
\end{equation}
where $\sigma$ is as in~\eqref{eq-mixer-def}. 
\end{definition}
In the context of Euclidean spaces $\tau(a, b, c)$ is the {Nagel point} of triangle $abc$, i.e., the point at which three perimeter bisectors meet~\cite{Kimberling}. It can be expressed as the weighted average of $a, b, c$ where the weight of each vertex is the sum of two adjacent sides minus the opposite side. Both the co-mixer property and the Lipschitz continuity of $\tau$ (with constant $3$) follow from~\eqref{eq-def-comixer} at once. However, the Lipschitz constant can be improved. 

\begin{lemma}\label{lem-nagel-lip}
In any normed space $X$, the Nagel co-mixer~\eqref{eq-def-comixer} is $1$-Lipschitz in each variable separately. 
\end{lemma}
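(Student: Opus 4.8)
The plan is to follow the method of Lemma~\ref{lem-incenter-lip}: reduce to a directional-derivative estimate and show that the derivative has norm at most $1$ almost everywhere. Since $\tau$ is symmetric in its three arguments and translation-equivariant, it suffices to prove $1$-Lipschitzness in the first variable after placing the third point at the origin; by positive homogeneity I may also rescale so that the second point satisfies $\|b\|=1$ (the degenerate case $b=c$ is immediate from the co-mixer identity $\tau(a,b,b)=a$). Writing $s=\|a\|$, $t=\|a-b\|$ and $p=1+s+t$, the defining formula~\eqref{eq-def-comixer} collapses to the convex combination $\tau(a,b,0)=\big((s+t-1)a+(1+t-s)b\big)/p$, which I would differentiate directly. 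One might instead hope to exploit the identity $\tau(a,b,c)=\sigma(b+c-a,\,a+c-b,\,a+b-c)$ together with the separate Lipschitzness of $\sigma$, but this route yields only the constant $3$, so a derivative computation seems unavoidable.

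First I would compute the directional derivative $D_u\tau$ for a unit vector $u$, along the lines of~\eqref{eq-DuF-1}. Setting $\phi=D_u\|a\|$ and $\psi=D_u\|a-b\|$, which exist a.e.\ and satisfy $|\phi|,|\psi|\le 1$ because the norm is $1$-Lipschitz, a short calculation should give
\[
p^2 D_u\tau = p(p-2)\,u + 2\phi\,\big(a-(1+t)b\big) + 2\psi\,\big(a+sb\big).
\]
As in Lemma~\ref{lem-incenter-lip}, the key elementary input is $p\ge 2$, coming from $s+t=\|a\|+\|a-b\|\ge\|b\|=1$; this makes the coefficient $p(p-2)$ nonnegative and ensures $\|p(p-2)u\|=p(p-2)$.

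I expect the main obstacle to be that estimating the two perturbation vectors separately is too lossy: the bounds $\|a-(1+t)b\|\le p$ and $\|a+sb\|\le 2s$ only give $\|p^2 D_u\tau\|\le p^2+4s$, which does not yield the constant $1$. The resolution is to keep $\phi$ and $\psi$ together and regroup by $a$ and $b$ before estimating. It then suffices to show $\|\phi(a-(1+t)b)+\psi(a+sb)\|\le p$, and the triangle inequality bounds the left side by
\[
g(\phi,\psi):=|\phi+\psi|\,s+|s\psi-(1+t)\phi|.
\]
Here $g$ is convex on $[-1,1]^2$, being a positive combination of absolute values of linear forms, so its maximum is attained at a vertex. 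I would check that at each of the four vertices $g$ equals exactly $p$, resolving the absolute values by means of the triangle inequalities $t\ge|s-1|$ and $s+t\ge 1$. Hence $g\le p$ on the whole square.

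Combining these estimates gives $\|p^2 D_u\tau\|\le p(p-2)+2p=p^2$, so $\|D_u\tau\|\le 1$ a.e. Since $p$ is bounded below, $\tau(\cdot,b,0)$ is locally Lipschitz, hence absolutely continuous on lines, and the a.e.\ derivative bound upgrades to $1$-Lipschitzness in the first variable. The symmetry of $\tau$ then yields the same bound in each variable. The only delicate point is the vertex computation for $g$, but that is a finite check rather than a genuine difficulty.
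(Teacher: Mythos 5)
Your proof is correct, and its computational core is identical to the paper's: after the same reduction (symmetry of $\tau$, translation invariance, positive homogeneity, the trivial case $b=c$), your formula for $p^2 D_u\tau$, once regrouped by $a$ and $b$, is exactly the paper's decomposition~\eqref{eq-DuG-rearr} under the renaming (your $a, b, s, t$) $=$ (paper's $x, a, \|x\|, \|x-a\|$), with $\phi=D_u\|x\|$ and $\psi=D_u\|x-a\|$. The only genuine difference is how the perturbation term is bounded by $2p$. The paper splits into two cases according to the sign of the coefficient of $a$ in~\eqref{eq-DuG-rearr} and estimates each case using $p\ge 2\|x\|$ and $p \ge 2$; as printed, those displays keep the coefficient $2(D_u\|x\|+D_u\|x-a\|)$ of $x$ without an absolute value, so they literally cover only the favorable sign patterns, the remaining sub-cases requiring similar but unwritten computations. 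You instead isolate the exact inequality to be proved, namely $g(\phi,\psi)=|\phi+\psi|\,s+|s\psi-(1+t)\phi|\le p$ on $[-1,1]^2$, observe that $g$ is convex, and verify the bound at the four vertices, where $g$ equals $p$ exactly (resolving $|s-1-t|$ via $t\ge s-1$). This is a cleaner and more robust finish: it is a uniform finite check with no sign bookkeeping, it handles all sign combinations of $\phi$ and $\psi$ at once, and it makes transparent that the only inputs are $|\phi|,|\psi|\le 1$ and the triangle inequality among $s$, $t$, $1$ --- the same ingredients the paper uses. The cost is a single standard observation (a convex function on a compact polytope attains its maximum at a vertex); in exchange you get an argument that is easier to verify line by line than the paper's case analysis, and the fact that $g=p$ at every vertex shows your estimate is tight, consistent with the constant $1$ being optimal (which is forced anyway by $\tau(x,b,b)=x$).
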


\begin{proof} As in the proof of Lemma~\ref{lem-incenter-lip}, it suffices to consider, for a fixed vector with $\|a\|=1$,  the map 
\[
G(x):=\tau(0, a, x) = a + x - 2 F(x) 
\]
where  
$F(x)=p^{-1} (x + \|x\|a)$ with $p = 1+\|x\|+\|x-a\|$.
Because of~\eqref{eq-DuF-1}, the directional derivative $D_uG$ is given by 
\[ 
p^2 D_u G(x) = (p^2 - 2p)u  - 2\big(a-x+\|x-a\|a\big) D_u\|x\|  
+2\big(x+\|x\|a\big)D_u\|x-a\| 
\]
Rearrange this as a linear combination of $u$, $x$, and $a$: 
\begin{equation}\label{eq-DuG-rearr}
\begin{split}
p^2 D_u G(x) & = (p^2 - 2p)u  + 2\big(D_u\|x\|+D_u\|x-a\|\big)x  
\\ &+ 2\big(\|x\|D_u\|x-a\|-(1+\|x-a\|)D_u\|x\| \big)a. 
\end{split}
\end{equation}
If the coefficient of $a$ in~\eqref{eq-DuG-rearr} is nonnegative, then the triangle inequality yields
\[
\begin{split}
p^2 \|D_u G(x)\| & \le p^2 - 2p
+ 2\big(D_u\|x\|+D_u\|x-a\|\big)\|x\| 
\\ & + 2 \|x\|D_u\|x-a\| - 2(1+\|x-a\|)D_u\|x\| 
\\ & \le p^2 - 2p
+ 4 \|x\|D_u\|x-a\| 
\le p^2 - 2p + 4\|x\| \le p^2
\end{split}
\]
since $p\ge 2\|x\|$. If the coefficient of $a$ in~\eqref{eq-DuG-rearr} is negative, then 
\[ \begin{split}
p^2 \|D_u G(x)\| & \le p^2 - 2p 
+2(D_u\|x\|)\|x\| +2(1+\|x-a\|)D_u\|x\| 
 \\ 
& \le p^2 - 2p   +2\|x\| 
+2(1+\|x-a\|) = p^2. 
\end{split}
\]
In either case we have $\|D_u G(x)\|\le 1$, completing the proof.
\end{proof}

\begin{remark}\label{remark-convexity}
Both the incenter mixer and the Nagel co-mixer of $a, b, c$ belong to the convex hull of the set $\{a, b, c\}$. 
\end{remark}

Every absolute Lipschitz retract $X$ can be realized as a retract of Banach space $\ell_\infty(I)$ for some index set $I$~\cite[Proposition 1.2]{BenyaminiLindenstrauss}. Lemma~\ref{lem-nagel-lip} implies that $X$ admits a Lipschitz co-mixer. The existence of a Lipschitz mixer on such spaces is well-known~\cite{HeinonenLip, Hohti}. 

\begin{remark} The ternary symmetric difference~\eqref{eq-set-comixer} is intertwined with the Nagel co-mixer~\eqref{eq-def-comixer} in the following way. The Lebesgue measure $\lambda$ is a map from $\mathcal M$ onto $[0, 1]$ with a right inverse  given by $\varphi(t) = [0, t]$. For any $a, b, c\in [0, 1]$ we have 
\[
\lambda\left(\varphi(a)\triangle \varphi(b)\triangle \varphi(c)\right) = a+b+c - 2\med(a, b, c).
\]
\end{remark}
 
\section{Retractions of finite subset spaces}\label{sec-retractions}

Any metric space $X$ can be viewed as the first member of an infinite sequence of nested metric spaces $(X(n), \dH)$, $n=1, 2, \dots$, where the elements of $X(n)$ are nonempty subsets of $X$ with at most $n$ elements, and $\dH$ is the Hausdorff metric. Given the natural isometric embeddings $X(n)\to X(n+1)$ one may ask whether Lipschitz retractions $X(n+1)\to X(n)$ also exist; this turns out to be a difficult question~\cite{Kovalev2022}. For example, the answer remains unknown when $X$ is a general normed space; the case of Hilbert spaces was settled in~\cite{Kovalev2016}. The first nontrivial case of this problem, $n=2$, has been solved by Akofor~\cite{Akofor} who constructed a $731$-Lipschitz retraction $X(3)\to X(2)$ for any normed space $X$. Here we present a simpler construction with a smaller Lipschitz constant. 

We say that a (co-)mixer is \emph{symmetric} if its value does not depend on the order of the three variables. Both the incenter mixer and the Nagel co-mixer are symmetric. The following lemma relates symmetric (co-)mixers to retractions.

\begin{lemma}\label{lem-retraction-from-mixing} 
Let $(X, d)$ be a metric space with a symmetric mixer $\sigma$ and a symmetric co-mixer $\tau$. If both $\sigma$ and $\tau$ are $L$-Lipschitz with respect to each argument, then
the map 
$\rho(\{a, b, c\}) = \{\sigma(a, b, c), \tau(a, b, c) \}$  
is a $9L$-Lipschitz retraction from $X(3)$ onto $X(2)$. \end{lemma}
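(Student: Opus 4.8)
The plan is to check, in order, that $\rho$ is well defined and lands in $X(2)$, that it restricts to the identity on $X(2)$, and finally that it is $9L$-Lipschitz for $\dH$. The first two are immediate from the defining identities. To evaluate $\rho$ on a set $S\in X(3)$ one must present $S$ as an unordered triple: if $|S|=3$ this presentation is forced, while if $|S|\le 2$ one repeats an element, e.g.\ $\{a,b\}=\{a,a,b\}$. Since $\sigma$ and $\tau$ are symmetric, the pair $\{\sigma,\tau\}$ does not depend on the order of the entries; and the absorption and anti-absorption identities~\eqref{eq-absorption},~\eqref{eq-anti-absorption} give $\sigma(a,a,b)=a$ and $\tau(a,a,b)=b$, so that the two possible paddings $(a,a,b)$ and $(a,b,b)$ of a two-point set produce the same output $\{a,b\}$. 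Hence $\rho$ is well defined, takes values in $X(2)$, and satisfies $\rho(\{a,b\})=\{a,b\}$ and $\rho(\{a\})=\{a\}$; in particular it is a retraction onto $X(2)$. I note that $\tau(a,a,b)=b$ forces $L\ge 1$, which I use freely below.

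For the Lipschitz estimate, fix $S,S'\in X(3)$ with $\dH(S,S')=r$. By the symmetry of the statement under interchanging $S$ and $S'$, it suffices to show that every point of $\rho(S')$ lies within $9Lr$ of $\rho(S)$. Write $\rho(S')=\{\sigma(\vec y),\tau(\vec y)\}$ for a triple $\vec y=(y_1,y_2,y_3)$ representing $S'$, and treat $w'=\sigma(\vec y)$ (the case $w'=\tau(\vec y)$ being identical, using the co-mixer identity in place of the mixer identity). Because $\dH(S,S')=r$, I may choose for each $j$ a nearest point $z_j\in S$ with $d(y_j,z_j)\le r$; applying the per-variable $L$-Lipschitz property of $\sigma$ one coordinate at a time gives $d(\sigma(\vec y),\sigma(\vec z))\le 3Lr$, where $\vec z=(z_1,z_2,z_3)$ has all entries in $S$.

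The main step is then to bound $d(\sigma(\vec z),\rho(S))$. Here $\vec z$ need not be a faithful representation of $S$: the nearest-point assignment can send several $y_j$ to the same element, so some element of $S$ may appear in $\vec z$ with the wrong multiplicity or not at all. This is exactly the point at which a naive coordinate-matching argument fails, since the best coordinate matching between the forced triples of $S$ and of $S'$ can be far larger than $\dH(S,S')$ (take two tight clusters carrying multiplicities $(2,1)$ on one side and $(1,2)$ on the other). The observation that saves the estimate is that whenever an element $s\in S$ is missing from $\vec z$, the element that displaced it lies within $2r$ of $s$; thus $\vec z$ differs from a genuine representation $\vec x=(x_1,x_2,x_3)$ of $S$ only by coordinate moves within clusters of diameter at most $2r$. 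Carrying $\vec z$ to $\vec x$ by these intra-cluster moves, and invoking the collapse identities $\sigma(a,a,b)=a$, $\tau(a,a,b)=b$ together with the per-variable Lipschitz bounds, one estimates $\dH(\{\sigma(\vec z),\tau(\vec z)\},\rho(S))$ by a constant multiple of $Lr$, at most $6Lr$ (this constant is not optimized). Combining the two steps yields $d(w',\rho(S))\le 3Lr+6Lr=9Lr$, as required.

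The genuine obstacle is precisely this last step. The individual values $\sigma(\vec x)$ and $\sigma(\vec y)$ may be far apart; it is only the unordered pair $\{\sigma,\tau\}$ that is stable, and the absorption/anti-absorption identities force a role-swap $\sigma\leftrightarrow\tau$ exactly when the cluster multiplicities on the two sides disagree. Making this ``cluster'' bookkeeping precise—say, through the connected components of the graph on $S\cup S'$ that joins two points at distance $\le r$—and verifying that every configuration is covered is the part that requires care, after which the per-variable Lipschitz estimates are routine. It is likely that the resulting constant $9L$ can be improved.
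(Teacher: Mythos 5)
Your setup is sound through the first three steps: the well-definedness of $\rho$ and the retraction property are handled correctly, the reduction to a one-sided estimate is legitimate, and the bound $d(\sigma(\vec y),\sigma(\vec z))\le 3Lr$ via nearest-point projection is fine. But the central step of the Lipschitz estimate rests on a claim that is false, and the case where it fails is exactly the case you then defer. You assert that, because every $s\in S$ missing from $\vec z$ lies within $2r$ of the entry that displaced it, ``$\vec z$ differs from a genuine representation $\vec x$ of $S$ only by coordinate moves within clusters of diameter at most $2r$.'' Counterexample: in $\mathbb R$ take $S=\{0,4,5\}$ and $S'=\{0,4.4\}$, so $r=\dH(S,S')=0.6$. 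With $\vec y=(0,0,4.4)$ the nearest-point choice forces $\vec z=(0,0,4)$; the missing element $5$ is indeed within $2r=1.2$ of its displacing entry $4$, but every representation of $S$ must have two entries in the cluster $\{4,5\}$, while $\vec z$ has two entries in the cluster $\{0\}$. Intra-cluster moves preserve the multiplicity profile over clusters, so no sequence of them carries $(0,0,4)$ to a representation of $S$; the nearest such representation is at distance $4$, not $O(r)$. What is true here is different: $\{\sigma(\vec z),\tau(\vec z)\}=\{0,4\}$ by \eqref{eq-absorption}--\eqref{eq-anti-absorption}, and this pair is within $2Lr$ of $\rho(S)$ because one can collapse the representation $(0,4,5)$ of $S$ by moving $5$ onto $4$, after which $\sigma(0,4,4)=4$ and $\tau(0,4,4)=0$ --- i.e., the small move must be applied to the representation of $S$, not to $\vec z$, and the estimate only closes with the roles of $\sigma$ and $\tau$ swapped. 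Your last paragraph shows you sense this (you name the role-swap), but you then state that ``verifying that every configuration is covered is the part that requires care'' and stop. That unverified case analysis is the entire content of the lemma, so the proof is incomplete precisely where it matters.

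The paper's proof eliminates this bookkeeping with one idea your argument is missing: compare $\rho(A)$ and $\rho(B)$ not to each other but to $\rho$ of a common intermediate set. Choose $f\colon A\to B$ and $g\colon B\to A$ with displacement at most $\delta=\dH(A,B)$, and define $h\colon B\to B$ by $h(b)=b$ for $b\in f(A)$ and $h(b)=f(g(b))$ otherwise; then $h(B)=f(A)$ and $\disp(h)\le 2\delta$. Pushing a representation of $A$ forward coordinate-wise by $f$, and a representation of $B$ forward by $h$, produces two valid representations of the same set $f(A)=h(B)$, so the per-variable Lipschitz property together with the representation-independence of $\rho$ (which you already proved) gives $\dH(\rho(A),\rho(f(A)))\le 3L\delta$ and $\dH(\rho(B),\rho(h(B)))\le 6L\delta$, hence $\dH(\rho(A),\rho(B))\le 9L\delta$. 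Note that the correct repair of your argument is this same mechanism in mirror image: rather than moving $\vec z$ toward a representation of $S$, push a representation of $S$ forward, by a map of displacement at most $2r$, onto the value set of $\vec z$, and invoke well-definedness of $\rho$; all cases, including the role-swap, are then absorbed into the statement that $\rho(E)$ does not depend on how $E$ is listed, and you recover exactly the constant $9L$.
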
 

\begin{proof} A set $E\subset X$ with fewer than $3$ elements can be written as $\{a, b, c\}$ by listing the same point more than once. The definition of $\rho$ shows that regardless of which point was repeated, $\rho(E) = E$.  Given any two sets $A, B\in X(3)$, let $\delta=\dH(A, B)$ and pick two maps $f\colon A\to B$ and $g\colon B\to A$ such that $\disp(f)\le \delta$ and $\disp(g)\le \delta$. Define $h\colon B\to B$ so that $h(b)=b$ when $b\in f(A)$ and $h(b)=f(g(b))$ otherwise. Note that $h(B)=f(A)$ and $\disp(h)\le 2\delta$. 

Using the Lipschitz property of $\sigma$ and $\tau$ together with the displacement bounds for $f$ and $h$, we obtain $\dH(\rho(A), \rho(f(A))) \le 3L\delta$ and 
$\dH(\rho(B), \rho(h(B))) \le 6L\delta$.
By the triangle inequality,
$\dH(\rho(A), \rho(B))\le  9L\delta$ as claimed. 
\end{proof}

Combining Lemmas~\ref{lem-incenter-lip},  \ref{lem-nagel-lip} and~\ref{lem-retraction-from-mixing}, we obtain a sharper version of~\cite[Theorem~3.18]{Akofor}.

\begin{theorem}
Every normed space $X$ admits a $9$-Lipschitz retraction $X(3)\to X(2)$ such that the image of every set $A\in X(3)$ is contained in the convex hull of $A$. 
\end{theorem}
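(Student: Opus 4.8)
The plan is to assemble the three lemmas that precede the statement, since all of the genuine work has already been carried out. First I would recall the two explicit symmetric operations supplied by the normed-space structure. The incenter mixer $\sigma$ of Definition~\ref{def-inradius-mixer} satisfies the absorption property~\eqref{eq-absorption} directly from formula~\eqref{eq-mixer-def}, is manifestly symmetric in its three arguments, and is $1$-Lipschitz in each variable by Lemma~\ref{lem-incenter-lip}. The Nagel co-mixer $\tau$ of Definition~\ref{def-nagel-comixer} satisfies the anti-absorption property~\eqref{eq-anti-absorption} from~\eqref{eq-def-comixer}, is likewise symmetric, and is $1$-Lipschitz in each variable by Lemma~\ref{lem-nagel-lip}. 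Thus the pair $(\sigma, \tau)$ meets every hypothesis of Lemma~\ref{lem-retraction-from-mixing} with $L = 1$.

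Next I would simply invoke Lemma~\ref{lem-retraction-from-mixing} applied to this pair. It yields that the map
\[
\rho(\{a, b, c\}) = \{\sigma(a, b, c), \tau(a, b, c)\}
\]
is a $9L = 9$-Lipschitz retraction from $X(3)$ onto $X(2)$. The retraction property, the fact that $\rho$ is well-defined regardless of which point of a degenerate triple is repeated, and the Hausdorff-metric Lipschitz estimate are all established inside that lemma, so nothing beyond substitution $L = 1$ is required here.

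Finally, for the convex-hull refinement, I would appeal to Remark~\ref{remark-convexity}, which records that both $\sigma(a, b, c)$ and $\tau(a, b, c)$ lie in the convex hull of $\{a, b, c\}$. Consequently the two-point set $\rho(A)$ is contained in the convex hull of $A$ for every $A \in X(3)$, giving the last clause of the theorem.

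I do not expect any genuine obstacle in this final assembly; the theorem is a corollary, and its entire substance has been displaced into the earlier results. If pressed to name the delicate point, it is the verification that the Lipschitz constant of the Nagel co-mixer is exactly $1$ rather than the trivial bound $3$ coming from~\eqref{eq-def-comixer}, which is precisely the content of Lemma~\ref{lem-nagel-lip} and is what drives the constant down to $9$; everything here merely feeds that $L = 1$ into the retraction mechanism of Lemma~\ref{lem-retraction-from-mixing}.
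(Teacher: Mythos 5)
Your proof is correct and follows exactly the paper's own route: the paper also obtains the theorem by feeding the incenter mixer (Lemma~\ref{lem-incenter-lip}) and the Nagel co-mixer (Lemma~\ref{lem-nagel-lip}), both symmetric and $1$-Lipschitz in each variable, into Lemma~\ref{lem-retraction-from-mixing} with $L=1$, and citing Remark~\ref{remark-convexity} for the convex-hull clause. Nothing is missing.
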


\bibliography{comixers-ref.bib} 

\begin{thebibliography}{10}

\bibitem{Akofor}
Earnest Akofor.
\newblock On {L}ipschitz retraction of finite subsets of normed spaces.
\newblock {\em Israel J. Math.}, 234(2):777--808, 2019.

\bibitem{ArkowitzOshimaStrom}
Martin Arkowitz, Hideaki Oshima, and Jeffrey Strom.
\newblock The inverses of an {$H$}-space.
\newblock {\em Manuscripta Math.}, 108(3):399--408, 2002.

\bibitem{BenyaminiLindenstrauss}
Yoav Benyamini and Joram Lindenstrauss.
\newblock {\em Geometric nonlinear functional analysis. {V}ol. 1}, volume~48 of {\em American Mathematical Society Colloquium Publications}.
\newblock American Mathematical Society, Providence, RI, 2000.

\bibitem{DHLT}
Noel Dejarnette, Piotr Haj{\l}asz, Anton Lukyanenko, and Jeremy~T. Tyson.
\newblock On the lack of density of {L}ipschitz mappings in {S}obolev spaces with {H}eisenberg target.
\newblock {\em Conform. Geom. Dyn.}, 18:119--156, 2014.

\bibitem{Dranishnikov}
A.~N. Dranishnikov.
\newblock Mixers. {C}onverse of a theorem of van {M}ill and van de {V}el.
\newblock {\em Mat. Zametki}, 37(4):587--593, 602, 1985.

\bibitem{HST}
Piotr Haj{\l}asz, Armin Schikorra, and Jeremy~T. Tyson.
\newblock Homotopy groups of spheres and {L}ipschitz homotopy groups of {H}eisenberg groups.
\newblock {\em Geom. Funct. Anal.}, 24(1):245--268, 2014.

\bibitem{Hatcher}
Allen Hatcher.
\newblock {\em Algebraic topology}.
\newblock Cambridge University Press, Cambridge, 2002.

\bibitem{HeinonenLip}
Juha Heinonen.
\newblock {\em Lectures on {L}ipschitz analysis}, volume 100 of {\em Report. University of Jyv\"{a}skyl\"{a} Department of Mathematics and Statistics}.
\newblock University of Jyv\"{a}skyl\"{a}, Jyv\"{a}skyl\"{a}, 2005.

\bibitem{HerronMayer}
David~A. Herron and Volker Mayer.
\newblock Bi-{L}ipschitz group actions and homogeneous {J}ordan curves.
\newblock {\em Illinois J. Math.}, 43(4):770--792, 1999.

\bibitem{Hohti}
Aarno Hohti.
\newblock On absolute {L}ipschitz neighbourhood retracts, mixers, and quasiconvexity.
\newblock {\em Topology Proc.}, 18:89--106, 1993.

\bibitem{James}
I.~M. James.
\newblock On spaces with a multiplication.
\newblock {\em Pacific J. Math.}, 7:1083--1100, 1957.

\bibitem{Kimberling}
Clark Kimberling.
\newblock Encyclopedia of triangle centers.
\newblock \url{https://faculty.evansville.edu/ck6/encyclopedia/ETC.html}, 2023.

\bibitem{Kovalev2016}
Leonid~V. Kovalev.
\newblock Lipschitz retraction of finite subsets of {H}ilbert spaces.
\newblock {\em Bull. Aust. Math. Soc.}, 93(1):146--151, 2016.

\bibitem{Kovalev2022}
Leonid~V. Kovalev.
\newblock Lipschitz clustering in metric spaces.
\newblock {\em J. Geom. Anal.}, 32(7):188, 2022.

\bibitem{Kovalev2023}
Leonid~V. Kovalev.
\newblock Lipschitz means and mixers on metric spaces.
\newblock {\em Illinois J. Math.}, to appear.

\bibitem{vanMill1}
Jan van Mill and Marcel van~de Vel.
\newblock On an internal property of absolute retracts.
\newblock {\em Topology Proc.}, 4(1):193--200 (1980), 1979.

\bibitem{vanMill2}
Jan van Mill and Marcel van~de Vel.
\newblock On an internal property of absolute retracts. {II}.
\newblock {\em Topology Appl.}, 13(1):59--68, 1982.

\end{thebibliography}
\bibliographystyle{plain} 
\end{document}